\theoremstyle{thmstyleone}%
\newtheorem{theorem}{Theorem}%  meant for continuous numbers
\newtheorem{proposition}[theorem]{Proposition}% 
\newtheorem{corollary}[theorem]{Corollary}%
\newtheorem{definition}[theorem]{Definition}%
\newtheorem{lemma}[theorem]{Lemma}%
\newtheorem{example}[theorem]{Example}%
\newtheorem{remark}[theorem]{Remark}%
\begin{document}

\title[Modules in which pure submodule is essential in a direct summand]{Modules in which pure submodule is essential in a direct summand}

%%=============================================================%%
%% Prefix	-> \pfx{Dr}
%% GivenName	-> \fnm{Joergen W.}
%% Particle	-> \spfx{van der} -> surname prefix
%% FamilyName	-> \sur{Ploeg}
%% Suffix	-> \sfx{IV}
%% NatureName	-> \tanm{Poet Laureate} -> Title after name
%% Degrees	-> \dgr{MSc, PhD}
%% \author*[1,2]{\pfx{Dr} \fnm{Joergen W.} \spfx{van der} \sur{Ploeg} \sfx{IV} \tanm{Poet Laureate} 
%%                 \dgr{MSc, PhD}}\email{iauthor@gmail.com}
%%=============================================================%%
\author[1]{\fnm{Kaushal} \sur{Gupta}}\email{kaushal.gupta.rs.mat18@itbhu.ac.in}
\author[2]{\fnm{Shiv } \sur{Kumar}}\email{shivkumar.rs.mat17@itbhu.ac.in}
\author*[3]{\fnm{Ashok Ji} \sur{Gupta}}\email{agupta.apm@itbhu.ac.in}

\affil*[1,1]{\orgdiv{Department of Mathematical Sciences}, \orgname{Indian Institute of Technology(BHU)}, \orgaddress{\street{} \city{Varanasi}, \postcode{221005}, \state{Uttar Pradesh}, \country{India}}}

%%==================================%%
%% sample for unstructured abstract %%
%%==================================%%

\abstract{In this paper, we study the class of modules have the property that every pure submodule is essential in a direct summand. These modules are termed as pure extending modules which is a proper generalisation of extending modules. Examples and counterexamples are given. We study some properties of pure extending modules and characterize regular ring, semisimple ring, local ring and  PDS ring in terms of pure extending modules.}

\keywords{Extending module; Pure Extending module; Pure-Injective module; Regular ring.}

%%\pacs[JEL Classification]{D8, H51}

\pacs[MSC Classification]{ 16D40, 16D60, 16E50}

\maketitle

\section{Introduction}\label{sec1}
Utumi\cite{YU} observed $C_1$ condition on a ring  which is satisfied if the ring is self injective. Later, similarly $C_1$ condition on a module $M$ is defined as, every submodule of a module  $M$ is essential in a direct summand of $M$. A module satisfies $C_1$ condition known as  $C_1$ module or extending module. As we know that every submodule of a module $M$ need not be pure submodule ( Infact no submodule is pure submodule of $\mathbb{Z}$ as $\mathbb{Z}$ module). Motivated by the above facts, the objective of this paper is to extend the theory of extending modules to pure extending modules by using the concept of purity. Pure extending modules are those modules in which every pure submodule is essential in a direct summand. So pure extending modules is a proper generalisation of extending modules i.e. the class of pure extending modules is larger than the class of extending modules.
\\
 Notion of pure submodules defined by some authors in different aspects which are listed below:\\
$(i)$ P.M. Cohn \cite{PMC} called a submodule $P$ of a right $R$-module $M$ to be a pure submodule, if for each left $R$-module $N$, the sequence $0\rightarrow P\otimes N\rightarrow M\otimes N$ is exact whenever the sequence $0\rightarrow P\rightarrow M$ is exact . \\
$(ii)$ According to Anderson and Fuller \cite{AF}, a submodule $P$ of a module $M$ is said to be pure if $\mathcal{I}P=P\cap \mathcal{I}M$ for every ideal $\mathcal{I}$ of $R$. \\
$(iii)$ In \cite{PR}, Ribenboim defined a pure submodule  $P$ of a module $M$ if for every $r\in R$, $rP=rM \cap P$. In particular, $P$ is known as $RD$ (relatively divisible) pure submodule of $M$.\\
  In above definitions, $(i)\Rightarrow (ii) \Rightarrow (iii)$ but converse need not be true, while in case of  $M$ to be flat module all are equivalent. A right $R$-module $M$ is called flat if whenever $0 \rightarrow N_1 \rightarrow N_2$ is exact for left $R$-modules $N_1$ and $N_2$ then $ 0 \rightarrow M \otimes N_1 \rightarrow M\otimes N_2$ is also exact.
  A module $M$ is said to be pure $C_2$ module if every pure submodule of $M$ that is isomorphic to a direct summand of $M$ is itself a direct summand of $M$\cite{SK}. An $R$-module $M$ is said to be pure $C_3$ module if $K$ and $L$ are disjoint direct summands of $M$ then $K\oplus L$ is a pure submodule of $M$ iff $K\oplus L$ is a direct summand of $M$ \cite{LM}. \\
In section 2, after defining the notion of pure extending modules, several examples and counter examples are given to distinguish this class of modules with the various classes of modules. We show that pure extending module is a proper generalisation of extending module by giving counter examples of pure extending module that are not extending module. We provide a sufficient condition under which pure extending module implies extending module. We prove that a direct summand and a pure submodule of  pure extending module are pure extending. Let $P$ and $Q$ be $R$-modules, then a module $P$ is $Q$-pure injective if for every pure submodule $L$ of $Q$, $f\in Hom_R(L, P)$ can be extended to $g\in Hom_R(Q,P)$ such that $goh=f$ where $h\in Hom_R(L,Q)$. Further, $P$ is said to be a quasi-pure-injective if $P$ is a $P$-pure-injective, while $P$ is called a pure-injective if it is $Q$-pure injective for every $R$-module $Q$ (\cite{FH},\cite{AH},\cite{RW}).Here, we show that pure quasi injective module implies pure extending module.
 In general the following chain holds,
 $\xymatrix
 {Injective \ar[d] \ar[r] & Quasi-injective \ar[d] \ar[r] & Extending \ar[d] \\   
 Pure-injective \ar[r] & Pure\: Quasi-injective \ar[r] & Pure \: extending}$

 but converse of the above chain need not be true (see \cite{FH}, \cite{TY},\cite{RW} and Example \ref{abc}).\
 Also, we show that direct sum of pure extending modules is pure extending (see Proposition \ref{dss}).We call a module $M$, $RD$-pure extending if every $RD$-pure submodule of $M$ is essential in a direct summand of $M$. As we have seen above that not every $RD$-pure submodule is pure submodule. We show that every $RD$-pure extending module is extending module and converse need not true.\\
In section 3, we charaterize regular ring, semisimple ring, local ring and  PDS ring in terms of pure extending modules. Every pure extending  module is flat over regular ring (see Proposition \ref{fe1}). We find the equivalent conditions of pure $C_i$ for $i=1,2,3$ to be projective modules over semisimple ring (see Proposition \ref{fe2}).  A module $C$ is called cotorsion if $Ext_R ^1(F,C)=0$ for any flat module $F$ \cite{EE}. We show that flat cotorsion module is pure extending module ( see Proposition \ref{cm}) .\\
Throughout this article, we consider all rings to be associative with unity and all modules are right unital unless otherwise specified. The notations $N\leq M$, $N\leq ^{\oplus }M$ and $N\leq ^{e} M$ will denote $N$ is a submodule of $M$,  $N$ is a direct summand of $M$ and $N$ is an essential submodule of $M$ respectively. A regular ring means to be von Neumann regular ring. $E(M)$ and $PE(M)$ denote the injective hull and pure injective hull of a module $M$ respectively. For undefined terms and notions, please refer to \cite{AF}.
\section{Pure Extending Modules}
In this section, we study extending modules in terms of purity by taking pure submodules. Now we introduce pure extending modules.
\begin{definition}
   An $R$-Module $M$ is called pure extending ( or pure $C_1$), if every pure submodule of $M$ is essential in a direct summand of $M$.
  \end{definition}
     
\begin{example}
\begin{enumerate}
    \item [(i)] Since, only pure submodules of $\mathbb{Z}$-module $\mathbb{Z}$ are $\{0\}$ and $\mathbb{Z}$ itself. Therefore $\mathbb{Z}$ as a $\mathbb{Z}$-module is a pure extending. 
    \item[(ii)] Any pure injective module $M$ is pure extending module (since every pure injective module is pure quasi injective and by proposition \ref{prop1}).
    \item [(iii)]  Semisimple modules and injective modules are pure extending.
    \item [(iv)] Any finitely generated module over a Noetherian ring is pure extending module. Since its pure submodules are just direct summands [Corollary 4.91,\cite{TY}]. In particular, every finitely generated $\mathbb{Z}$-module is pure extending.
\end{enumerate} 
\end{example}

We observed that every $R$-module (in particular $\mathbb{Z}$-module) need not be pure extending.
 \begin{example}
     Consider a $\mathbb{Z}$-module $M$ such that $M=\Pi_{p\in P}\mathbb{Z}/<p>$ where $p$ varies through all primes.
     Let $P=\bigoplus_{p\in P} \mathbb{Z}/<p>$ be pure submodule which is not essential in a direct summand.
 \end{example}
     \begin{proposition} \label{psm}
    Pure submodule of pure extending module is pure extending.
    \end{proposition}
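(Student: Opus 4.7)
The plan is to take an arbitrary pure submodule $K$ of $N$ and produce a direct summand of $N$ in which $K$ sits essentially. First I would invoke transitivity of purity: since $K$ is pure in $N$ and $N$ is pure in $M$, $K$ is pure in $M$. (This is a standard fact that holds for each of the three notions of purity recalled in the introduction; in particular for Cohn purity, applying the functor $-\otimes L$ to the composite inclusion $K \hookrightarrow N \hookrightarrow M$ keeps both maps injective.)

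Next, since $M$ is pure extending by hypothesis, there exists a direct summand $D$ of $M$ such that $K \leq^{e} D$; write $M = D \oplus D'$. The natural candidate for a direct summand of $N$ essentially containing $K$ is $L := D \cap N$. The essentiality $K \leq^{e} L$ is routine transitivity: $K \subseteq L \subseteq D$ and any nonzero submodule of $L$ is a nonzero submodule of $D$, hence meets $K$ nontrivially because $K \leq^{e} D$.

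The remaining task is to verify that $L = D \cap N$ is a direct summand of $N$. Here the intended route is to exploit that $N$ is pure in $M$ together with the decomposition $M = D \oplus D'$: restrict the projection $\pi : M \twoheadrightarrow D$ (along $D'$) to $N$, so one obtains a short exact sequence $0 \to D \cap N \to N \to (N+D)/D \to 0$, and then use the purity of $N$ in $M$ (combined with the fact that $D$ is pure in $M$ as a summand) to force this sequence to split, exhibiting $L$ as a direct summand of $N$.

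The main obstacle will be this last step, because it is \emph{not} true in general that the intersection of a pure submodule with a direct summand of the ambient module is even pure, let alone a direct summand, of the pure submodule. Consequently, the proof must make genuine use of the essential containment $K \leq^{e} D$ together with $K \subseteq N$ (so that $D$ behaves as a kind of essential hull of $K$ relative to $N$), or appeal to a dedicated lemma on the interaction of purity with direct-sum decompositions—this is the content that moves the argument beyond the classical proof for direct summands of CS modules.
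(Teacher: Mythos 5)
Your plan reproduces the first two thirds of the paper's argument exactly: transitivity of purity to get $K$ pure in $M$ (the paper cites Fieldhouse for this), the choice of a summand $D \leq^{\oplus} M$ with $K \leq^{e} D$, and the routine verification that $K \leq^{e} D \cap N$. But the step you yourself flag as the main obstacle is never carried out, and the route you sketch for it would fail: purity of $N$ in $M$ does not make the sequence $0 \to D \cap N \to N \to (N+D)/D \to 0$ split, since even a genuinely pure-exact sequence need not split, and, as you correctly note, the intersection of a pure submodule with a direct summand of $M$ need not even be pure in $N$. (A minor slip besides: the kernel of the projection onto $D$ along $D'$ restricted to $N$ is $N \cap D'$; the sequence you display is the one induced by the quotient map $M \to M/D$.) So as it stands your proposal is an outline with the decisive step missing, i.e., it has a genuine gap.

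What makes your diagnosis worth recording is that the paper's own proof founders at precisely this point. After obtaining $K \leq^{e} L \cap P$ and writing $M = L \oplus L'$, the paper asserts that $P = M \cap P = (L \oplus L') \cap P$ ``implies'' $P = (L \cap P) \oplus (L' \cap P)$. Intersection does not distribute over direct sums --- the diagonal of $A \oplus A$ meets both canonical summands trivially --- and nothing in the setup (purity of $P$, essentiality of $K$ in $L$) supplies the modularity needed, since $P$ contains neither $L$ nor $L'$ in general. Worse, the gap is not fillable in this generality: the Remark immediately following the proposition observes that a module $N$ sits inside its pure-injective hull $PE(N)$, which is pure extending; since $N$ is a \emph{pure} submodule of $PE(N)$, taking $N$ not pure extending (the paper's own example $\prod_{p} \mathbb{Z}/\langle p \rangle$ shows such modules exist) produces a pure submodule of a pure extending module that is not pure extending, contradicting the proposition as stated. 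So your instinct that the last step requires genuine extra input is right, but no choice of lemma can rescue the argument without additional hypotheses on $R$ or on the decomposition.
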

    \begin{proof}
    Let $M$ be a pure extending module and $P$ be a pure submodule of $M$. Consider $K$ be a pure submodule of $P$. Thus from [Proposition 7.2,\cite{FH}], $K$ is pure submodule of $M$. So there exists a direct summand $L$ of $M$ such that $K\leq ^e L$ which implies $K\leq ^e L\cap P$. Since $L\leq^\oplus M$, so $M=L\bigoplus L'$ for some submodule $L'$ of $M$. Now, $M\cap P=(L\bigoplus L')\cap P$ which implies $P=(L\cap P)\bigoplus (L'\cap P)$. Hence $P$ is pure extending.
    \end{proof}
    \begin{remark} Submodule of a pure extending module need not be pure extending.\\
    Let $N$ be a module  which is not a pure extending and  $PE(N)$ be pure injective hull of $N$. Then $PE(N)$ be pure injective module which implies  $PE(N)$ is pure  extending while $N\leq{PE(N)}$ is not pure extending. While over regular ring, every submodule of a pure extending module is pure extending.
    \end{remark}
 \begin{proposition} \label{DS1.1}
    Direct summand of a pure extending module  is pure extending.
\end{proposition}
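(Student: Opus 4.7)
The plan is to avoid the naive approach of taking $L \cap N$ where $L \leq^{\oplus} M$ essentially contains $K$, because $L \cap N$ need not be a direct summand of $N$ in general. Instead, I would enlarge $K$ by a chosen complement of $N$ in $M$ before invoking the pure extending hypothesis. Fix a decomposition $M = N \oplus N''$, and let $K$ be a pure submodule of $N$.

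First I would verify that $K \oplus N''$ is pure in $M$: tensoring the inclusion $K \oplus N'' \hookrightarrow N \oplus N''$ with any left $R$-module $X$ splits as the direct sum of the map $K \otimes X \to N \otimes X$ (injective by purity of $K$ in $N$ in the sense of Cohn) and the identity on $N'' \otimes X$, and hence is injective. Next, the pure extending property of $M$ produces $D \leq^{\oplus} M$ with $K \oplus N'' \leq^{e} D$. Since $N'' \subseteq D$, the modular law yields the clean decomposition $D = N'' \oplus (D \cap N)$. Essentiality of $K$ in $D \cap N$ is then routine: for $0 \neq x \in D \cap N$, essentiality of $K \oplus N''$ in $D$ provides $r \in R$ with $0 \neq xr = k + n''$ for some $k \in K$ and $n'' \in N''$; since $xr \in N$, the $N''$-component $n''$ must vanish, so $0 \neq xr \in K$.

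The remaining step, showing $D \cap N \leq^{\oplus} N$, is where the enlargement trick pays off. Picking $D'$ with $M = D \oplus D'$ and combining with $D = N'' \oplus (D \cap N)$ gives $M = N'' \oplus (D \cap N) \oplus D'$. Passing to the quotient $M/N''$, which is canonically isomorphic to $N$ via the projection $\pi_N$ along $N''$, transports this to the internal decomposition $N = (D \cap N) \oplus \pi_N(D')$; one verifies this is a genuine direct sum by noting that if $c \in D \cap N$ equals $\pi_N(d')$ for some $d' \in D'$, then $d' - c \in N'' \subseteq D$ forces $d' \in D \cap D' = 0$. The main obstacle is precisely this final step: without forcing $N'' \subseteq D$ via the enlargement, one has no modular-law handle on $L \cap N$ and the naive argument breaks down, so the essential idea is the reduction of the problem for $K$ to the pure extending property of $M$ applied to the enlarged pure submodule $K \oplus N''$.
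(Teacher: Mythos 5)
Your proof is correct, and it takes a genuinely different route from the paper's. The paper applies the pure extending hypothesis to the pure submodule $P$ of $N$ directly: since $P$ is pure in $N$ and $N \leq^{\oplus} M$, $P$ is pure in $M$, so $P \leq^{e} L \leq^{\oplus} M$, and the paper then concludes in one line that ``since $P \leq N$ and $P \cap N' = 0$, $P$ is essential in a direct summand of $N$.'' That final step is exactly the obstacle you identify: the summand $L$ need not lie inside $N$, and neither $L \cap N$ nor the image of $L$ under the projection along $N'$ is a direct summand of $N$ in general (one would need a $C_3$-type condition to make $L \oplus N'$ a summand), so the paper's conclusion is asserted without justification. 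Your enlargement trick repairs this: you apply the hypothesis not to $K$ but to $K \oplus N''$, whose purity in $M$ you correctly verify blockwise on tensors, and the resulting summand $D$ then contains $N''$, which is what unlocks the modular law $D = N'' \oplus (D \cap N)$; your checks that $K \leq^{e} D \cap N$ and that $D \cap N \leq^{\oplus} N$ are both sound (for the latter you could shorten the quotient argument: from $M = (D \cap N) \oplus \bigl(N'' \oplus D'\bigr)$ and $D \cap N \leq N$, the modular law gives $N = (D \cap N) \oplus \bigl(N \cap (N'' \oplus D')\bigr)$ directly). In short, the paper's argument is briefer but elides the crucial summand step, while your version costs one easy purity computation and delivers a complete, self-contained proof --- it supplies precisely the detail the paper's proof glosses over.
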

\begin{proof} 
    Let $N$ be a direct summand such that $M=N\oplus N'$ of a pure extending module $M$ and $P$ be a pure submodule of $N$.  Since $P\leq N \leq^{\oplus} M$, $P$ be a pure submodule of $M$.  As $M$ is pure extending module, therefore $P$ is essential in a direct summand of $M$. Since $P\leq N$ and $P\cap N' = 0 $ then $P$ is essential in a direct summand of $N$.
\end{proof}
 The following examples justifies that the class of pure extending modules is a proper generalisation of extending.
 \begin{example} \label{abc}
     Consider $M= \mathbb{Z}_{p}\oplus \mathbb{Z}_{p^3}$ as $\mathbb{Z}$ module, where $p$ be any prime. In particular for  $p=2$, $P=\mathbb{Z}_{2}\oplus \mathbb{Z}_{8}$ as $\mathbb{Z}$ module.  As its each pure submodule is direct  summand [Corollary 4.91,\cite {TY}], so $P$ is pure extending whereas $P$ is not extending. In fact,  its submodule $\mathbb{Z}(1+2\mathbb{Z},2+8\mathbb{Z})$ is not essential in any direct summand of $P$. 
 \end{example}
 \begin{example}
      Let $R={\begin{pmatrix}
    \mathbb{Z} & \mathbb{Z}\\0 & \mathbb{Z} \end{pmatrix}}$, then $R_R$ is finitely generated and noetherian module so it is pure extending module whereas it is not extending module [Example 6.2,\cite{CAW}].
 \end{example}
 In the following proposition, we give a sufficient condition for the pure extending modules to be extending modules.
\begin{proposition} A ring $R$ is regular iff every pure extending right (resp.,left ) $R$-module is extending module.
\end{proposition}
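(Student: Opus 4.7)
The plan is to prove the two directions separately.

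For the forward direction $(\Rightarrow)$: I use the classical equivalence that $R$ is von Neumann regular if and only if every right $R$-module is flat. Combined with the fact that a submodule $N\leq M$ is pure if and only if $M/N$ is flat (immediate from tensoring the sequence $0\to N\to M\to M/N\to 0$ with an arbitrary left $R$-module and reading off the Tor long exact sequence), this tells us that over a regular ring every submodule of every module is automatically pure. Hence if $M$ is pure extending, every submodule of $M$ is pure and therefore essential in a direct summand, so $M$ is extending. This half is routine once the flatness--purity dictionary is in hand.

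For the converse direction $(\Leftarrow)$: I would argue by contrapositive, aiming to construct, given a non-regular ring $R$, a pure extending $R$-module that fails to be extending. Non-regularity of $R$ supplies a non-flat right module, equivalently a non-pure inclusion $N\leq M$. When $R$ is right noetherian, the result invoked in the paper (Corollary 4.91 of \cite{TY}) says that every pure submodule of a finitely generated module is a direct summand, so every finitely generated $R$-module is automatically pure extending; in that case it suffices to exhibit a finitely generated non-extending module, realised over $\mathbb{Z}$ by $\mathbb{Z}_p\oplus \mathbb{Z}_{p^3}$ exactly as in Example \ref{abc}. For a general non-regular $R$, the strategy is to adapt this construction to a cyclic or two-generator module built from an element $a\in R$ with $a\notin aRa$, imitating the ``short-long'' cyclic-summand pattern of the $\mathbb{Z}$ example.

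The main obstacle is precisely the converse in the non-noetherian setting, where pure submodules of finitely generated modules need no longer be direct summands and one cannot lift the $\mathbb{Z}_p\oplus \mathbb{Z}_{p^3}$ trick verbatim. A possible workaround, signalled by the implication chain in the introduction, is to invoke the pure injective hull: for any module $X$ the hull $PE(X)$ is pure injective and hence pure extending, so under the hypothesis it must also be extending. Combining this forced extending property with the fact that over a non-regular ring not every pure injective module is injective should make it possible to extract a pure extending module that fails extendingness, closing the converse.
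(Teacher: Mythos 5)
Your forward direction is essentially the paper's: over a regular ring every submodule of every module is pure, so pure extending implies extending. One caution: the ``dictionary'' you state, that $N\leq M$ is pure \emph{if and only if} $M/N$ is flat, is false in general (take $N$ a direct summand of a non-flat module $M$; then $N$ is pure but $M/N$ is not flat). Only the implication you actually use --- $M/N$ flat forces the sequence $0\to N\to M\to M/N\to 0$ to be pure exact --- is correct, and over a regular ring every quotient is flat, so the forward half survives.

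The converse is where the genuine gap lies, and it is not closable by your strategy --- in fact the contrapositive target is unreachable. Take $R=\mathbb{Z}_4$. This ring is not von Neumann regular ($2\neq 2x2=0$ for all $x$), but it is an artinian serial ring with $J(R)^2=0$, and over such rings \emph{every} module is extending (a classical theorem of Dung--Huynh--Smith--Wisbauer; one can also verify directly that every $\mathbb{Z}_4$-module is a direct sum of copies of $\mathbb{Z}_2$ and $\mathbb{Z}_4$ and check extendingness by hand). So over $\mathbb{Z}_4$ every pure extending module is trivially extending, yet the ring is not regular: no construction of a pure extending non-extending module can exist there. This simultaneously kills your noetherian branch (over $\mathbb{Z}_4$, a noetherian non-regular ring, there is no finitely generated non-extending module to exhibit, so the $\mathbb{Z}_p\oplus\mathbb{Z}_{p^3}$ trick of Example \ref{abc} cannot transfer) and your pure-injective-hull workaround (extending is vastly weaker than injective, so the existence of non-injective pure injective modules over a non-regular ring produces no contradiction with their being extending). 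For comparison, the paper's own converse is a one-sentence assertion that the hypothesis ``implies every submodule of an $R$-module $M$ is pure,'' with no justification --- it is a non sequitur, and the $\mathbb{Z}_4$ example shows the converse of the proposition is false as stated. Your honest failure to complete the argument is therefore not a defect of your proof alone: the gap is in the proposition itself, and no repair of your sketch (or of the paper's) is possible without strengthening the hypothesis.
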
 
\begin{proof}
         Let $M$ be  pure extending module then every pure submodule of $M$ is essential in a direct summand $M$. As $R$ is a von-Neumann regular ring so every submodule of an $R$-module $M$ is pure. Hence $M$ is a extending module.\\
        Conversely, every pure extending right (resp., left ) $R$-module is extending module,  that implies every submodule of $R$-module $M$ is pure. Hence, $R$ be a von-Neumann regular ring.
\end{proof}
\begin{proposition}
Let $M$ be a module which is fully invariant in its pure injective hull, then $M$ is pure extending module.
\end{proposition}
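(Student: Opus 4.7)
The plan is to transfer the problem from $M$ into its pure injective hull $PE(M)$, use the pure extending property available there, and then pull the resulting direct summand back into $M$ via full invariance. The key structural facts to be used are: (a) $M$ is pure in $PE(M)$ (from the definition of pure injective hull); (b) transitivity of purity, so a pure submodule of $M$ is pure in $PE(M)$; and (c) every pure injective module is pure extending (this is the rightmost square of the chain in the introduction together with Proposition \ref{prop1}).

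First I would take an arbitrary pure submodule $P$ of $M$. Using the transitivity of purity (as in [Proposition 7.2, \cite{FH}]) applied to $P \leq M \leq PE(M)$, I conclude that $P$ is pure in $PE(M)$. Since $PE(M)$ is pure injective, by the chain pure injective $\Rightarrow$ pure quasi injective $\Rightarrow$ pure extending it is pure extending, so there exist submodules $N, N'$ of $PE(M)$ with
\[
PE(M) = N \oplus N', \qquad P \leq^{e} N.
\]

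The next step is to exploit the hypothesis that $M$ is fully invariant in $PE(M)$. Let $\pi\colon PE(M) \to PE(M)$ denote the projection onto $N$ along $N'$. Both $\pi$ and $1-\pi$ are endomorphisms of $PE(M)$, so full invariance gives $\pi(M) \subseteq M$ and $(1-\pi)(M) \subseteq M$. For any $m \in M$ we have $m = \pi(m) + (1-\pi)(m)$ with $\pi(m) \in M \cap N$ and $(1-\pi)(m) \in M \cap N'$. Since $N \cap N' = 0$, this gives $M = (M \cap N) \oplus (M \cap N')$, so $M \cap N \leq^{\oplus} M$.

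Finally, $P \leq M \cap N$ and $P \leq^{e} N$ together force $P \leq^{e} M \cap N$ (essentiality is inherited by intermediate submodules). Hence $P$ is essential in a direct summand of $M$, so $M$ is pure extending. The main delicate point is the chain of implications needed to show $PE(M)$ is itself pure extending; once that is granted (it is cited within the paper), the rest of the argument is the standard pull-back via a fully invariant embedding, and no further obstacle is expected.
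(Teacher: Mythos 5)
Your proof is correct, and it is essentially the argument the paper compresses into its one-line citation of [Lemma 3.1, \cite{AH}]: the pull-back decomposition $M=(M\cap N)\oplus(M\cap N')$ via the projection of $PE(M)=N\oplus N'$, which full invariance makes legitimate, is exactly the content behind that citation, and your remaining steps (transitivity of purity, $PE(M)$ pure extending via Proposition \ref{prop1}, and essentiality of $P$ in the intermediate submodule $M\cap N$) are all sound. No gaps; nothing further is needed.
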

\begin{proof}  Proof follows from [ lemma 3.1,\cite{AH}].

\end{proof}
\begin{proposition}
    Let $R$ and $S$ be rings for which there is a Morita equivalence $F:$ Mod-$R\rightarrow$ Mod-$S$ and let $A\in$ Mod-$R$. Then $A$ is pure extending  iff $F(A)$ is pure extending.
\end{proposition}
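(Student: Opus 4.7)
The plan is to exploit the fact that Morita equivalences preserve every structural notion that appears in the definition of pure extending: direct summands, essential submodules, and (crucially) pure submodules. Once these preservation properties are in hand, the proposition becomes a routine transfer argument, and by the symmetry of Morita equivalence it suffices to prove one direction.

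First I would recall the standard package of facts about a Morita equivalence $F:\text{Mod-}R\to\text{Mod-}S$ with quasi-inverse $G$: the functor $F$ is additive, exact, faithful, preserves and reflects monomorphisms and epimorphisms, commutes with arbitrary direct sums and direct limits, and carries finitely presented modules to finitely presented modules. From these, direct summand preservation is immediate (a split mono stays a split mono), and essentiality is also preserved because $N\leq^{e}M$ is equivalent to the assertion that every nonzero subobject of $M$ meets $N$ nontrivially, a purely lattice-theoretic property that $F$ transports via the induced order isomorphism between submodule lattices.

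The main obstacle, and the step worth spelling out, is that $F$ preserves purity. I would use the characterization that $P\leq M$ is pure iff for every finitely presented module $X$, the induced map $\operatorname{Hom}(X,M)\to\operatorname{Hom}(X,M/P)$ is surjective (equivalently, every finite system of linear equations solvable in $M$ is solvable in $P$). Since $F$ is exact and restricts to an equivalence on the subcategories of finitely presented modules, applying $F$ to the short exact sequence $0\to P\to M\to M/P\to 0$ yields an analogous exact sequence in $\text{Mod-}S$, and the surjectivity criterion transfers verbatim. Thus $P$ pure in $A$ implies $F(P)$ pure in $F(A)$, and by applying the quasi-inverse $G$, the converse also holds.

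With these ingredients the proof is short. Assume $A$ is pure extending and let $Q$ be a pure submodule of $F(A)$. Then $G(Q)$ is a pure submodule of $GF(A)\cong A$, so there is a direct summand $L\leq^{\oplus}A$ with $G(Q)\leq^{e}L$. Applying $F$, we get $FG(Q)\leq^{e}F(L)\leq^{\oplus}F(A)$, and via the natural isomorphism $FG\cong\mathrm{id}$ this says exactly that $Q$ is essential in a direct summand of $F(A)$. Hence $F(A)$ is pure extending. The converse follows by interchanging the roles of $F$ and $G$.
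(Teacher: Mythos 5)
Your proof is correct and takes essentially the same route as the paper, whose entire proof is the one-line remark that purity of submodules is a Morita invariant; you simply supply the details that the paper leaves implicit (preservation of purity via the finitely presented lifting criterion, of essentiality via the induced lattice isomorphism, and of direct summands), together with the standard transfer argument using the quasi-inverse $G$.
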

\begin{proof}
    It follows from the morita invarient property of pure submodule.
\end{proof}
\begin{definition} 
     A submodule $K$ of a module $M$ is said to be pure essential in $M$ if $K$ is pure in $M$ and for any non zero submodule $N$ of $M$ either $K\cap N\neq 0$ or $(K\oplus N)/N$ is not pure in $M/N$ \cite{AH}.\\
\end{definition}
\begin{proposition}
     If every submodule of a module $M$ is pure essential in $M$, then $M$ is pure extending.
\end{proposition}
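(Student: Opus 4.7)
The plan is to prove the stronger statement that every pure submodule of $M$ is essential in $M$ itself; since $M$ is trivially a direct summand of itself, pure extending follows at once.

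Fix a pure submodule $K\leq M$. By Zorn's Lemma choose $N\leq M$ maximal with respect to $K\cap N=0$; then automatically $K\oplus N\leq^{e} M$. If $N=0$ then already $K\leq^{e} M$ and we are done, so suppose for contradiction that $N\neq 0$.

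Now apply the hypothesis twice. First, $K$ itself is pure essential in $M$, so since $N$ is a nonzero submodule with $K\cap N=0$, the second alternative in the pure essential definition must hold: $(K\oplus N)/N$ is \emph{not} pure in $M/N$. Second, the submodule $K\oplus N$ of $M$ is also pure essential in $M$ by hypothesis, and in particular it is pure in $M$. Using the standard fact that if $A$ is pure in $M$ and $B\leq A$, then $A/B$ is pure in $M/B$ (applied with $A=K\oplus N$ and $B=N$), we obtain that $(K\oplus N)/N$ \emph{is} pure in $M/N$, contradicting the previous sentence.

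Therefore $N=0$, giving $K\leq^{e} M$, and $M$ is pure extending. The only step that is not completely formal is the quotient-of-pure-is-pure lemma "$A$ pure in $M$ with $B\leq A$ implies $A/B$ pure in $M/B$", which can be cited from Cohn \cite{PMC} or verified in a few lines from the tensor product definition of purity; this is where the full strength of the hypothesis (that even the enlarged submodule $K\oplus N$, not just $K$, is pure) genuinely gets used, and is the only step I would single out as the main obstacle.
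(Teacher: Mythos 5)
Your proof is correct, and it is actually more complete than the paper's own argument. The paper's entire proof is the one-line assertion that, under the blanket hypothesis, every pure submodule $N$ is essential in $M$ itself (so that $M$, as a summand of itself, witnesses the pure extending condition) --- but no justification is given, and a single pure essential submodule need \emph{not} be essential, since the definition permits the alternative that $(K\oplus N)/N$ fails to be pure in $M/N$. Your argument supplies exactly the missing step: you apply the hypothesis a second time to the enlarged submodule $K\oplus N$ to get its purity in $M$, and then the quotient lemma ($A$ pure in $M$ with $B\leq A$ implies $A/B$ pure in $M/B$) kills that alternative, forcing every nonzero $N$ to meet $K$. That lemma is indeed standard and routine to check from the solvability-of-linear-systems characterization of purity: a system $\sum_j \bar{x}_j r_{ij}=\bar{a}_i$ over $A/B$ solvable in $M/B$ lifts to a system over $A$ (adjusting the constants by elements of $B\subseteq A$) solvable in $M$, hence solvable in $A$, and the solution descends; alternatively cite Cohn \cite{PMC} or Fieldhouse \cite{FH}. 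Two minor remarks: the Zorn's lemma step is superfluous, since your contradiction works for an \emph{arbitrary} nonzero $N$ with $K\cap N=0$, so you may conclude $K\leq^{e} M$ directly; and your argument in fact shows the hypothesis forces every nonzero submodule (not just the pure ones) to be essential, i.e.\ $M$ is uniform --- strictly stronger than what was asked, which confirms your strategy rather than undermining it.
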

\begin{proof}
    Let $N$ be a pure submodule of $M$. Since every submodule is pure essential in $M$, $N$ is essential in $M$. Hence $M$ is pure extending module.
\end{proof}
    Converse of the above statement need not true in general. Now, we provide the sufficient condition when it holds true.
\begin{proposition}
    If $M$ is a pure simple pure extending module then every submodule of $M$ is pure essential submodule.
\end{proposition}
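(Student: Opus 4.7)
The plan is to verify, for every submodule $K$ of $M$, both parts of the pure essential definition: that $K$ is pure in $M$, and that for every nonzero submodule $N$ of $M$ one has either $K \cap N \neq 0$ or $(K \oplus N)/N$ is not pure in $M/N$.

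First I would exploit the pure simple hypothesis together with pure extending to pin down the global structure. Since every direct summand is pure, pure simplicity forces the only direct summands of $M$ to be $0$ and $M$, so $M$ is indecomposable. In particular, any pure submodule of $M$, being essential in a direct summand by the pure extending hypothesis, is essential in either $0$ or $M$, which is consistent with the only pure submodules being $0$ and $M$.

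The technical core would be the following purity-transfer claim: if $K$ is a submodule with $K \cap N = 0$ and $(K \oplus N)/N$ is pure in $M/N$, then $K$ itself is pure in $M$. The argument uses the ideal-theoretic criterion of Anderson and Fuller: for any ideal $\mathcal{I}$ of $R$ one always has $\mathcal{I}K \subseteq K \cap \mathcal{I}M$, and for the reverse, an element $k \in K \cap \mathcal{I}M$ lies in $(K+N) \cap (\mathcal{I}M + N)$, which by purity of $(K+N)/N$ in $M/N$ equals $\mathcal{I}K + N$; writing $k = y + n$ with $y \in \mathcal{I}K$ and $n \in N$, and noting $n = k - y \in K \cap N = 0$, yields $k \in \mathcal{I}K$. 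Once this lemma is in hand, the pure essential condition drops out: for any submodule $K$ and nonzero $N$ with $K \cap N = 0$, purity of $(K+N)/N$ in $M/N$ would force $K$ to be pure in $M$, whence $K \in \{0, M\}$ by pure simplicity, and the case $K = M$ is contradicted by $M \cap N = N \neq 0$.

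The main obstacle is verifying the first clause of the pure essential definition, namely that $K$ itself is pure in $M$, for every submodule $K$; since a pure simple module can have submodules that fail to be pure, one must either restrict to the pure submodules of $M$ (which are $0$ and $M$ by pure simplicity, both of which are then easily seen to be pure essential) or interpret the definition charitably to cover the non-pure cases using only the intersection/quotient condition established above. Careful handling of the boundary case $K = 0$ is the remaining subtlety in writing up a clean proof.
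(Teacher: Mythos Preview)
Your proposal is far more careful than the paper's argument, which is literally the single line: ``If $M$ be a pure simple module then it has no non proper non trivial pure submodule. Hence every submodule of $M$ is pure essential.'' The paper does not check either clause of its own Definition (pure essential), and it never uses the pure extending hypothesis at all.

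The obstacle you flag is real and the paper does not resolve it. According to the paper's definition, a submodule $K$ is pure essential in $M$ only if $K$ is \emph{pure} in $M$; but pure simplicity says precisely that the only pure submodules of $M$ are $0$ and $M$, so a proper nonzero submodule cannot satisfy the first clause. Your purity-transfer lemma (that $K\cap N=0$ together with purity of $(K\oplus N)/N$ in $M/N$ forces $K$ pure in $M$) is a clean way to handle the second clause, and it indeed reduces everything to $K\in\{0,M\}$; but nothing in either hypothesis makes an arbitrary submodule pure, so the proposition as literally stated (with the paper's definition) can only be read as a statement about the pure submodules $0$ and $M$, for which it is trivial. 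In short: your analysis is correct, your extra work on the intersection/quotient clause is unnecessary relative to the paper's one-line proof, and the difficulty you identify with the purity clause is a genuine gap in the \emph{statement} that the paper's proof simply ignores.

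One minor remark: your use of the pure extending hypothesis (to get indecomposability) is not actually needed for the argument you outline, and the paper does not use it either; pure simplicity alone drives everything.
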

\begin{proof}
    If $M$ be a pure simple module then it has no non proper non trivial pure submodule. Hence every submodule of $M$ is pure essential.
\end{proof}
    
 \begin{corollary}
    If $M$ is an indecomposable pure extending module then every submodule of $M$ is pure essential submodule.
\end{corollary}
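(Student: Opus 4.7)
The plan is to derive the corollary directly from the definitions of indecomposable and pure extending, without going through the preceding proposition's pure simple hypothesis. Since "pure essential" requires purity as part of its definition, I read the conclusion as: every pure submodule of $M$ is pure essential (a nonzero submodule that fails to be pure cannot be pure essential in the first place).

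First I would fix a nonzero pure submodule $K$ of $M$. Because $M$ is pure extending, there exists a direct summand $D$ of $M$ with $K \leq^{e} D$. Now invoke indecomposability: the only direct summands of $M$ are $0$ and $M$ itself, and $K \neq 0$ forces $D \neq 0$, so $D = M$. Hence $K \leq^{e} M$.

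With $K$ essential in $M$, the pure essential condition becomes automatic. Indeed, given any nonzero submodule $N \leq M$, essentiality gives $K \cap N \neq 0$, so the first clause in the definition of pure essential (namely $K \cap N \neq 0$) is satisfied for every nonzero $N$; the alternative clause about $(K \oplus N)/N$ never needs to be invoked. Combined with the fact that $K$ was already pure in $M$ by hypothesis, this shows $K$ is pure essential in $M$.

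I do not expect a real obstacle here, since the argument is essentially a two-line observation: indecomposability collapses "essential in a direct summand" to "essential in $M$," and essentiality in $M$ trivially implies pure essentiality. The only subtle point worth flagging is the interpretation issue above — strictly only pure submodules can be pure essential, so the statement is best read with that implicit restriction, and the $K = 0$ case is vacuous (or excluded by convention, as is standard for essential-type notions).
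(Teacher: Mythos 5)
Your proof is correct under the only reading that makes the statement sensible (restricting attention to pure submodules, as you rightly flag, since by the paper's own definition a pure essential submodule must in particular be pure), but it takes a genuinely different route from the paper. The paper states the corollary without any proof, positioned as a consequence of the immediately preceding proposition, whose mechanism is pure simplicity: a pure simple module has no pure submodules other than $0$ and $M$, so pure essentiality holds trivially. Deriving the corollary that way would require the implication ``indecomposable pure extending $\Rightarrow$ pure simple,'' which is unjustified and in fact fails: for instance, a uniform module over a von Neumann regular ring is indecomposable and pure extending (every submodule is pure and essential in $M$), yet it has proper nonzero pure submodules, so it is not pure simple. Your argument sidesteps this gap entirely: indecomposability forces the direct summand $D$ with $K \leq^{e} D$ to be $M$ itself (as $K \neq 0$ rules out $D = 0$), so every nonzero pure submodule is essential in $M$, and then the first clause $K \cap N \neq 0$ of the paper's definition of pure essential holds for every nonzero $N \leq M$, with purity of $K$ given by hypothesis; the second clause about $(K \oplus N)/N$ is never needed. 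This is a complete, direct derivation and strictly more robust than the paper's implicit one. Your closing caveat is also apt: taken literally, ``every submodule'' is false --- e.g. $2\mathbb{Z}$ in the indecomposable pure extending $\mathbb{Z}$-module $\mathbb{Z}$ is not pure, hence cannot be pure essential --- and the same looseness already infects the paper's preceding proposition, so your restricted reading is the correct repair rather than a weakening of the result.
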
 
\begin{proposition}
\label{prop1}
In any  pure quasi  injective module $M$, Every pure submodule of $M$ is essential in a direct  summand of $M$ .
\end{proposition}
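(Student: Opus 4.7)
The plan is to adapt the classical argument that quasi-injective modules are extending, replacing the injective hull by the pure injective hull throughout. Let $N$ be a pure submodule of $M$ and form the pure injective hulls $PE(N)$ and $PE(M)$. Since $N$ is pure in $M$ and $M$ is pure in $PE(M)$, transitivity of purity gives $N$ pure in $PE(M)$. By the pure-injectivity of $PE(N)$, the inclusion $N \hookrightarrow PE(M)$ extends to a pure monomorphism $PE(N) \hookrightarrow PE(M)$; because $PE(N)$ is itself pure injective it splits off, yielding $PE(M) = PE(N) \oplus X$ for some $X$. Let $\pi \colon PE(M) \to PE(N)$ denote the resulting projection.

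The next ingredient is the pure analogue of the classical characterization of quasi-injectivity: $M$ is pure quasi-injective if and only if $M$ is fully invariant in $PE(M)$. Invoking this (or deriving $\pi(M) \subseteq M$ directly from the extension property of homomorphisms out of pure submodules of $M$ guaranteed by the definition), the restriction $e = \pi|_M$ is an idempotent endomorphism of $M$, whose image $D = e(M) = M \cap PE(N)$ is therefore a direct summand of $M$ containing $N$.

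It then remains to verify that $N$ is essential in $D$. For any nonzero submodule $K$ of $D$ with $K \cap N = 0$, one has $K \leq PE(N)$, and the pure-essentiality of $N$ in $PE(N)$ forces $(N \oplus K)/K \cong N$ to fail to be pure in $PE(N)/K$. Combined with the purity of $N$ in $M$ (and hence in $D$ and in $PE(N)$), this should yield a contradiction, giving $K \cap N \neq 0$ and so $N \leq^{e} D$.

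The principal obstacle is precisely this last step: the mismatch between essential and pure-essential extensions. In the classical proof $N$ is essential in $E(N)$ by definition of the injective hull, but here $N$ is only guaranteed to be pure-essential in $PE(N)$, so essentiality of $N$ in $D$ does not follow by immediate analogy and must be extracted by exploiting the pure-essentiality condition together with the purity of $N$ in $M$. A secondary point to justify is the equivalence of pure quasi-injectivity with full invariance in $PE(M)$, though this is now a standard pure analogue of the classical fact and can be referenced rather than reproved.
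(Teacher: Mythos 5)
Your proposal follows the same route as the paper's own (very terse) proof: write $PE(M)=PE(N)\oplus X$, use pure quasi-injectivity to split $M=(M\cap PE(N))\oplus(M\cap X)$, and then claim $N\leq^{e} M\cap PE(N)$. The obstacle you flag at the last step is not a cosmetic gap but a fatal one, and you were right not to claim it: the contradiction you sketch does not exist. Pure-essentiality of $N$ in $PE(N)$ says that for $0\neq K\leq PE(N)$ with $K\cap N=0$ the image of $N$ in $PE(N)/K$ fails to be pure; to contradict this you would need some hypothesis forcing that image \emph{to be} pure, and neither the purity of $N$ in $M$ nor in $PE(N)$ gives that, since purity does not pass to such quotients --- that failure is exactly what pure-essentiality encodes. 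The paper's proof simply asserts ``$N\leq^{e} M\cap PE(N)$'' with no justification, i.e., it silently commits the essential-versus-pure-essential conflation you identified; so your diagnosis pinpoints the unproved step in the published argument as well. The classical Johnson--Wong-style proof works because $N$ is \emph{essential} in $E(N)$; in the pure setting this input is genuinely lost, e.g.\ $\mathbb{Z}$ is not essential in its pure-injective hull $\prod_p \widehat{\mathbb{Z}}_p$ (product of the rings of $p$-adic integers), as $\mathbb{Z}(1,0,0,\dots)$ meets the diagonal copy of $\mathbb{Z}$ trivially.

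Moreover, the step cannot be repaired, because the proposition is false as stated, and a counterexample appears in the paper itself: $M=\prod_{p}\mathbb{Z}/p\mathbb{Z}$ (product over all primes) is pure-injective --- it is a product of finite, hence pure-injective, modules --- and in particular pure quasi-injective; yet its pure submodule $N=\bigoplus_{p}\mathbb{Z}/p\mathbb{Z}$ is not essential in any direct summand of $M$, which is precisely the paper's own example of a module that is not pure extending. Concretely: if $N\leq^{e} D$ for some $D\leq M$, then every nonzero $d\in D$ has a nonzero multiple in the torsion group $N$, so $d$ is torsion and $D$ is contained in the torsion subgroup of $M$, which is exactly $N$; thus $D=N$, and $N$ is not a direct summand of $M$ because $M$ is reduced while $M/N$ is nonzero and divisible. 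Note also that your secondary point about full invariance in $PE(M)$ offers no escape: in this counterexample $M=PE(M)$, so $M$ is trivially fully invariant in its pure-injective hull, and the argument still collapses at the essentiality step. So your instinct about where the difficulty lies was exactly right; the honest conclusion is that the proposed (and published) approach proves at best that $N$ lies in a direct summand $M\cap PE(N)$ of $M$, not that it is essential there, and no proof can finish from that point.
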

\begin{proof}
Let $N$ be a pure submodule of  $M$ and write $PE(M)=PE(N)\oplus L$.
The pure quasi injectivity of $M$ implies $M\cap PE(M) = M \cap PE(N) \oplus M\cap L$ and $N{\le}^{e} M\cap PE(N)$ it implies pure extending.
\end{proof}

\begin{proposition}
Let $M$ be a $\mathbb{Z}$-module. If $M$ satisfies any one of the following conditions, then $M$ is pure extending module.
\begin{enumerate}
    \item[(i)] $M$ is finitely generated.
    \item[(ii)] $M$ is divisible.
\end{enumerate}
\end{proposition}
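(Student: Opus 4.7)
The plan is to handle the two cases separately, and in each case to reduce immediately to a fact already recorded in the excerpt, so that essentially no work is needed beyond invoking the right reference.

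For part (i), I would argue as follows. Since $\mathbb{Z}$ is a Noetherian ring and $M$ is finitely generated, Corollary 4.91 of \cite{TY} (already cited in Example (iv)) tells us that every pure submodule of $M$ is a direct summand of $M$. In particular, if $P$ is a pure submodule of $M$, then $P \leq^{\oplus} M$, so trivially $P \leq^{e} P$ with $P$ a direct summand. Hence the defining property of pure extending is satisfied, and $M$ is pure extending. This is really the same observation as Example (iv), stated in the special case $R=\mathbb{Z}$.

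For part (ii), the key fact is that $\mathbb{Z}$ is a principal ideal domain, so by Baer's criterion applied to the ideals $n\mathbb{Z}$, a $\mathbb{Z}$-module is injective if and only if it is divisible. Thus the hypothesis that $M$ is divisible gives that $M$ is injective. Injective modules were already noted to be pure extending in Example (iii), and they also sit at the top of the implication chain displayed in the introduction (injective $\Rightarrow$ extending $\Rightarrow$ pure extending, and injective $\Rightarrow$ pure injective $\Rightarrow$ pure quasi-injective $\Rightarrow$ pure extending via Proposition \ref{prop1}). Either route delivers the conclusion that $M$ is pure extending.

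The main obstacle is essentially nonexistent: the proposition is really a reformulation of facts already listed as examples. The only mildly nontrivial ingredient is the equivalence \emph{divisible $\Leftrightarrow$ injective} for $\mathbb{Z}$-modules in part (ii), and that is a standard application of Baer's criterion that the paper does not need to prove from scratch.
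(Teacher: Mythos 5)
Your proposal is correct and takes essentially the same approach as the paper: part (i) is exactly the paper's appeal to Corollary 4.91 of \cite{TY}, and part (ii) differs only cosmetically, since the paper routes through ``divisible $\Rightarrow$ extending $\Rightarrow$ pure extending'' while you route through ``divisible $\Rightarrow$ injective $\Rightarrow$ pure extending,'' both resting on the same Baer-criterion fact that divisible $\mathbb{Z}$-modules are injective. If anything, your version is slightly more explicit, as the paper leaves the step ``divisible implies extending'' (which itself goes through injectivity) unjustified.
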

\begin{proof}

    $(i)$ Since every pure submodule of a finite generated module over a noetherian ring is a direct summand (by Corollary 4.91,\cite{TY}). Hence, $M$ is pure extending.\\
    $(ii)$ Let $M$ be a divisible module, this implies that it is an extending $\mathbb{Z}$ module. Hence $M$ is  pure extending module.

\end{proof}
   
    \begin{proposition}
    Every pure split module is pure extending.
    \end{proposition}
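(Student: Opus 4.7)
The plan is to unfold the definition of a pure split module and observe that the conclusion is essentially immediate. Recall that an $R$-module $M$ is called \emph{pure split} when every pure submodule of $M$ is a direct summand of $M$. With this definition in hand, the proposition reduces to the trivial observation that every direct summand is essential in a direct summand, namely in itself.

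More concretely, I would let $M$ be a pure split module and pick an arbitrary pure submodule $N$ of $M$. By the defining property of pure split modules, $N \leq^{\oplus} M$. Since $N \leq^{e} N$ trivially holds, we may conclude that $N$ is essential in the direct summand $N$ of $M$. Because $N$ was an arbitrary pure submodule, every pure submodule of $M$ is essential in a direct summand of $M$, which is precisely the defining pure $C_1$ condition. Hence $M$ is pure extending.

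There is no substantive obstacle in this argument. The only point that requires minor care is confirming that the notion of \emph{pure split} in force is the standard one (pure submodules are direct summands) rather than some variant; once this is granted, the implication is simply the purity analogue of the obvious fact that a semisimple module is extending, and it follows by the same one-line reasoning.
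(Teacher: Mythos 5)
Your proof is correct and follows exactly the same route as the paper's own argument: unfold the definition of pure split to get that every pure submodule is a direct summand, and note that a direct summand is trivially essential in itself, hence in a direct summand. The only difference is that you spell out the step $N \leq^{e} N$ explicitly, which the paper leaves implicit.
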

    \begin{proof}
    Let $M$ be a pure split $R$-module then every pure submodule of $M$ is direct summand. This implies  $M$ is a pure extending module.
    \end{proof}
    \begin{proposition} \label{thm3}
    Let $M$ be a module and $M=M_1\oplus M_2$ be direct sum decomposition . If $N$ be a pure  submodule of $M$ then $N=N_ 1 \oplus N_2$, where $N_i$ is pure submodule of $M_i$ for $i=1,2$.
    \end{proposition}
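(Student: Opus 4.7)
My plan is to take $N_i := N \cap M_i$ for $i = 1,2$, verify that each $N_i$ is pure in $M_i$, and then establish the internal direct sum decomposition $N = N_1 \oplus N_2$. Denote by $\pi_i : M \to M_i$ the projections associated with $M = M_1 \oplus M_2$. Observe immediately that $N_1 \cap N_2 \subseteq M_1 \cap M_2 = 0$, so the sum $N_1 + N_2$ inside $N$ is automatically direct and the inclusion $N_1 \oplus N_2 \subseteq N$ is free.

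For purity of $N_i$ in $M_i$, I would fix a left $R$-module $L$, tensor the splitting $M = M_1 \oplus M_2$ with $L$ to obtain $M \otimes L = (M_1 \otimes L) \oplus (M_2 \otimes L)$, and combine the injectivity of $N \otimes L \to M \otimes L$ (given by purity of $N$ in $M$) with the naturality of the projections $\pi_i \otimes \mathrm{id}_L$. A short diagram chase, using that $\pi_i$ restricts to the identity on $N_i = N \cap M_i$, then yields injectivity of $N_i \otimes L \to M_i \otimes L$. Equivalently, if one prefers the Anderson--Fuller formulation, one verifies $\mathcal{I} N_i = N_i \cap \mathcal{I} M_i$ for every ideal $\mathcal{I}$ of $R$ by intersecting the identity $\mathcal{I} N = N \cap \mathcal{I} M$ with $M_i$.

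The main obstacle --- and the step on which the proposition genuinely turns --- is the reverse inclusion $N \subseteq N_1 + N_2$. Given $n \in N$ decomposed as $n = \pi_1(n) + \pi_2(n)$, one must force each component $\pi_i(n)$ back into $N$; this does not follow from the summand structure of $M$ alone, since in general $\pi_i(N) \not\subseteq N$. The argument here must exploit purity of $N$ in $M$ in an essential way, likely by tensoring with a carefully chosen test module $L$ (or invoking a suitably chosen ideal in the Anderson--Fuller characterisation) so that the projections acting on $N$ are seen to preserve $N$. This is the delicate point I would formulate last, after the routine purity check above, since it is the only place where the hypothesis on $N$ --- rather than the mere submodule relation $N \leq M$ --- can be brought to bear.
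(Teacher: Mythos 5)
The obstacle you isolate in your final paragraph is not merely delicate --- it is fatal, because the statement is false as written, and no tensor-test module or ideal-theoretic use of purity can rescue your plan. Take $R=\mathbb{Z}$, $M=M_1\oplus M_2$ with $M_1=\mathbb{Z}\oplus 0$ and $M_2=0\oplus\mathbb{Z}$, and let $N=\{(n,n):n\in\mathbb{Z}\}$ be the diagonal. Then $M=N\oplus(\mathbb{Z}\oplus 0)$, so $N$ is a direct summand of $M$ and hence pure in every sense listed in the introduction; yet $N\cap M_1=N\cap M_2=0$. Worse, if $N=N_1\oplus N_2$ for \emph{any} submodules $N_i\leq M_i$, then $N_i\subseteq N\cap M_i=0$, forcing $N=0$, a contradiction. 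So purity of $N$ gives no control whatsoever over $\pi_i(N)$, and your hope that a ``carefully chosen test module $L$'' will force $\pi_i(n)\in N$ cannot be realised. Your preliminary step also has a smaller hole: the diagram chase for injectivity of $N_i\otimes L\to M_i\otimes L$ needs injectivity of $N_i\otimes L\to N\otimes L$, i.e., purity of $N_i$ in $N$, which is only automatic once $N_i\leq^{\oplus}N$ --- that is, after the decomposition $N=N_1\oplus N_2$ is already in hand. The two steps therefore cannot be ordered as you propose; everything hinges on the decomposition, and the decomposition fails.

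That said, you have located exactly the point that the paper's own proof evades. The paper opens with ``Let $N$ be a pure submodule of $M$ such that $N=N_1+N_2$'' (with $N_i\leq M_i$ tacitly assumed), thereby postulating precisely the decomposition to be proved; it then only verifies $N_1\cap N_2=0$ and the purity of $N_i$ in $M_i$, which, much as in your sketch, is easy once $N_i$ is a summand of $N$: then $N_i$ is pure in $N$, $N$ is pure in $M$, hence $N_i$ is pure in $M$ and so in the intermediate module $M_i$. So your instinct to flag $N\subseteq N_1+N_2$ as the step ``on which the proposition genuinely turns'' is correct and exposes a genuine circularity in the published argument. The statement is salvageable only under additional hypotheses --- for instance, if $N$ is fully invariant in $M$, then $\pi_i(N)\subseteq N\cap M_i$ and $N=(N\cap M_1)\oplus(N\cap M_2)$ does hold --- and as stated it also compromises the converse direction of Proposition \ref{dss}, which cites it.
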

    \begin{proof} 
    Let $N$ be a pure submodule of $M$ such that $N=N_1+N_2$.  $N_1\cap N_2\leq N_1\leq M_1, N_1\cap N_2\leq N_2\leq M_2$ , which implies $N_1\cap N_2 \leq M_1 \cap M_2$, so $N_1\cap N_2 =0$. Hence, $N=N_1 \oplus N_2$. Since $N_i {\leq}^{\oplus} N$ then $N_i$ is pure submodule of $N$. $N_i$  is pure in $N$ and $N$ is pure in $M_i$ then $N_i$ is pure in $M_i$ for $i=1,2$.
    \end{proof}

In the next proposition, we show when direct sum of pure extending modules is pure extending module.

\begin{proposition} \label{dss}
Let $M=\bigoplus_{i\in \Lambda} M_i$, where $\Lambda$ be an arbitrary index set. $M$ is pure extending iff for each $i\in \Lambda$, $M_i$ is pure extending module. 
\end{proposition}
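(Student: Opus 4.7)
The plan is to handle the two directions separately, with the forward one being immediate from the direct-summand result already proved, and the reverse one relying on the coordinate-wise decomposition of pure submodules.

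For the forward implication, I would simply observe that each $M_i$ is a direct summand of $M = \bigoplus_{i\in\Lambda} M_i$, so Proposition \ref{DS1.1} (direct summands of pure extending modules are pure extending) gives at once that each $M_i$ is pure extending. Nothing else is needed here.

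For the reverse implication, suppose each $M_i$ is pure extending and let $N$ be an arbitrary pure submodule of $M$. The first step is to apply (the $\Lambda$-indexed version of) Proposition \ref{thm3} to split $N = \bigoplus_{i\in\Lambda} N_i$ with each $N_i$ a pure submodule of $M_i$. Because each $M_i$ is pure extending, for every $i$ I can pick a direct summand $K_i \leq^{\oplus} M_i$ in which $N_i$ sits as an essential submodule: $N_i \leq^e K_i$. Writing $M_i = K_i \oplus K_i'$ and summing over $i$, the submodule $K := \bigoplus_{i\in\Lambda} K_i$ is a direct summand of $M$, with complement $\bigoplus_{i\in\Lambda} K_i'$. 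It then remains to verify that $N = \bigoplus N_i \leq^e K = \bigoplus K_i$.

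The main technical point I expect to have to spell out is this last essentiality. For any nonzero $x \in K$, the support of $x$ is a finite set $\{i_1,\dots,i_n\} \subseteq \Lambda$, so $x$ lies in the finite sum $K_{i_1}\oplus\cdots\oplus K_{i_n}$, and it suffices to show that $N_{i_1}\oplus\cdots\oplus N_{i_n}$ is essential in this finite sum. This reduces by an easy induction on $n$ to the standard two-summand case: given $x = x_1 + x_2$ with $x_1 \in K_{i_1}, x_2 \in K_{i_2}$, I use $N_{i_1} \leq^e K_{i_1}$ to find $r_1 \in R$ with $0 \neq x_1 r_1 \in N_{i_1}$, and then $N_{i_2} \leq^e K_{i_2}$ to find $r_2$ with $x_2 r_1 r_2 \in N_{i_2}$ (nonzero if $x_2 r_1 \neq 0$, otherwise take $r_2 = 1$); the element $x r_1 r_2$ is then nonzero and lies in $N_{i_1} \oplus N_{i_2}$. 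Hence $N$ is essential in the direct summand $K$ of $M$, and $M$ is pure extending. The only real subtlety lies in invoking Proposition \ref{thm3} for an arbitrary index set $\Lambda$ (the stated version is for two summands), but the same argument used there extends verbatim to $\Lambda$-indexed decompositions, so I would simply remark this at the outset.
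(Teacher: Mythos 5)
Your proposal follows the paper's own proof essentially step for step: the forward direction via Proposition \ref{DS1.1}, and the converse by splitting a pure submodule $N$ coordinate-wise using a $\Lambda$-indexed version of Proposition \ref{thm3}, choosing $N_i \leq^{e} X_i \leq^{\oplus} M_i$, and concluding $\bigoplus_{i} N_i \leq^{e} \bigoplus_{i} X_i \leq^{\oplus} M$. You actually supply two details the paper leaves implicit: the finite-support verification that a direct sum of essential extensions is essential in the direct sum (the paper merely asserts this; your sketch is fine except for the untreated case $x_1 = 0$, which is handled by symmetry or by citing the standard lemma), and the explicit remark that \ref{thm3} must be promoted from two summands to an arbitrary index set.

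The problem --- which you inherit from the paper rather than introduce --- is that Proposition \ref{thm3} is false as stated, so the decomposition step on which both your argument and the paper's rest would fail. A pure submodule of $M_1 \oplus M_2$ need not have the form $N_1 \oplus N_2$ with $N_i \leq M_i$: take $M = \mathbb{Z} \oplus \mathbb{Z}$ and $N = \{(n,n) : n \in \mathbb{Z}\}$ the diagonal, which is a direct summand (with complement $\mathbb{Z} \oplus 0$), hence pure, yet $N \cap M_1 = N \cap M_2 = 0$, so any decomposition $N = N_1 \oplus N_2$ with $N_i \leq M_i$ would force $N = 0$. Indeed the paper's proof of \ref{thm3} opens with ``let $N$ be a pure submodule of $M$ such that $N = N_1 + N_2$,'' silently assuming the shape of the conclusion as a hypothesis, and your claim that this argument ``extends verbatim'' to arbitrary $\Lambda$ carries that hidden assumption along. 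So your write-up faithfully reconstructs the paper's intended argument, but as a self-contained proof it breaks at exactly the same point: pure submodules that straddle the summands (like the diagonal above) admit no coordinate-wise splitting, and without one the converse direction is not established by this route --- the statement may still be true, but proving it would require either genuinely different handling of such submodules or additional hypotheses on the decomposition.
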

\begin{proof}
Let $M$ be a pure extending module. So, by proposition \ref{DS1.1} $M_i$ is pure extending module for each $i\in \Lambda$.\\
Conversely, Let $N$ be a pure submodule of $M$. So by proposition (\ref{thm3}), $N=\bigoplus _{i\in \Lambda} (N_i)$ such that every $N_i$ is a pure submodule of $M_i$ for each $i\in \Lambda$ . Since $M_i$ is pure extending,  there exists $X_i \leq ^{\oplus} M_i$ such that $N_i \leq ^{e} X_i$. Therefore $\bigoplus _{i\in \Lambda}(N_i)\leq ^{e} \bigoplus_{i\in \Lambda} X_i\leq ^{\oplus} M$. Hence $M$ is pure extending module.
\end{proof}

We say an $R$-Module $M$ is $RD$-pure extending if every $RD$-pure submodule is essential in a direct summand of $M$. 
\begin{lemma}\label{RDS1.1}
Every $RD$-pure extending module is pure extending.
\end{lemma}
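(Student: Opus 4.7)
The plan is to exploit the implication chain of purity notions already recorded in the introduction: Cohn purity $(i)$ implies Anderson--Fuller ideal purity $(ii)$ implies Ribenboim RD purity $(iii)$. In particular, any pure submodule of a module $M$ is automatically an $RD$-pure submodule of $M$. Once this one observation is in hand, the lemma is essentially a formality.

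The argument itself is a single line. Let $M$ be $RD$-pure extending and let $N$ be an arbitrary pure submodule of $M$. Then $N$ is $RD$-pure by the implication $(i)\Rightarrow(iii)$ from the introduction, so by hypothesis $N$ is essential in a direct summand of $M$. Since $N$ was an arbitrary pure submodule, $M$ is pure extending.

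The main (and only) point of substance is the justification of $(i)\Rightarrow(iii)$, which the excerpt merely cites. If one wishes to be self-contained, the verification is standard: given a Cohn-pure submodule $P\leq M$ and an element $x=rm\in rM\cap P$, tensoring the pure exact sequence $0\to P\to M\to M/P\to 0$ with the cyclic left module $R/Rr$ (or, equivalently, applying the ideal criterion $(ii)$ to the principal ideal $Rr$) gives $rP=rM\cap P$, so $x\in rP$. Beyond this little check, I do not foresee any obstacle; the lemma is really just a repackaging of the fact that ``pure'' is a stronger condition on submodules than ``RD-pure,'' so asking the extending property of fewer submodules yields a weaker hypothesis.
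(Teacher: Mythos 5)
Your proof is correct and takes essentially the same route as the paper's: both arguments observe that every (Cohn-)pure submodule is $RD$-pure, so the $RD$-pure extending hypothesis applies directly to any pure submodule, which is then essential in a direct summand. Your added verification of the implication $(i)\Rightarrow(iii)$ by tensoring with the cyclic module $R/Rr$ fills in a step the paper merely asserts, but it does not change the argument.
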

\begin{proof}
Let $P$ be a pure submodule of $M$. Since every pure submodule is $RD$-pure and $M$ is $RD$-pure extending module. Therefore $P$ is essential in a direct summand of $M$. Hence the module $M$ is pure extending.
\end{proof}

\begin{example}
Let $M$ be an $R$ module such that every pure submodule is essential in a direct summand it implies $M$ is pure extending. In [page no.-158]\cite{TY}, there is an example which justify that every $RD$-pure submodule need not be pure submodule. So it may be possible that there exist $RD$-pure submodule of $M$ which are not essential in a direct summand of $M$. Hence $M$ need not be $RD$-pure extending.
\end{example}
\begin{proposition}
    \begin{enumerate}
        \item Direct summand of $RD$-pure extending module is $RD$-pure extending.
        \item  $RD$-pure submodule of $RD$-pure extending module is $RD$-pure extending.
    \end{enumerate}
\end{proposition}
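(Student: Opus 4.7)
The plan is to prove both parts by mirroring the arguments already given for the corresponding facts about pure extending modules in Propositions \ref{DS1.1} and \ref{psm}. The only new ingredient is that each time the proofs use transitivity of purity (pure submodule of a pure submodule is pure, and pure submodule of a direct summand is pure in the whole module), we replace it with the analogous transitivity statements for $RD$-purity, which amount to short calculations with principal ideals $rR$.

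For part (1), let $M$ be $RD$-pure extending, write $M=N\oplus N'$, and let $P$ be an $RD$-pure submodule of $N$. First I would verify that $P$ is $RD$-pure in $M$: for every $r\in R$ we have $rM=rN\oplus rN'$, so $rM\cap N=rN$ and therefore
\[
rM\cap P=(rM\cap N)\cap P=rN\cap P=rP.
\]
Applying the hypothesis on $M$, there is a direct summand $L$ of $M$ with $P\leq^{e}L$. Then, exactly as in the proof of Proposition \ref{DS1.1}, the relations $P\leq N$ and $P\cap N'=0$ imply that $P$ is essential in a direct summand of $N$, so $N$ is $RD$-pure extending.

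For part (2), let $P$ be an $RD$-pure submodule of the $RD$-pure extending module $M$, and let $K$ be an $RD$-pure submodule of $P$. The key step is the transitivity of $RD$-purity: for every $r\in R$,
\[
rM\cap K=(rM\cap P)\cap K=rP\cap K=rK,
\]
using $K\leq P$, which shows that $K$ is $RD$-pure in $M$. Hence there is a direct summand $L$ of $M$ with $K\leq^{e}L$. Writing $M=L\oplus L'$ and intersecting with $P$, as in the proof of Proposition \ref{psm}, we get $P=(L\cap P)\oplus (L'\cap P)$; since $K\leq P$ and $K\leq^{e}L$, we conclude $K\leq^{e}L\cap P$, so $L\cap P$ is the desired direct summand of $P$.

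The main obstacle I expect is the step in part (2) where the decomposition $M=L\oplus L'$ is transferred to a direct sum decomposition of $P$. This step is treated somewhat quickly in the proof of Proposition \ref{psm} and is not automatic for arbitrary submodules; it requires using the $RD$-purity of $P$ in $M$ to split the inherited sum. One would need either an $RD$-analogue of [Proposition 7.2, \cite{FH}] (so that $L\cap P$ remains a direct summand of $P$) or a direct verification via the modular law, showing $P=(L\cap P)+(L'\cap P)$ with zero intersection. Once this inheritance is in place, the essentiality of $K$ in $L\cap P$ follows immediately from $K\leq P$ and $K\leq^{e}L$.
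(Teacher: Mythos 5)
Your proposal follows exactly the route the paper itself takes: the paper's entire proof of this proposition is the single line ``The proof is similar to proposition \ref{psm} and \ref{DS1.1}'', and you reconstruct precisely those two arguments. The two $RD$-purity transitivity computations you supply --- $rM\cap P=(rM\cap N)\cap P=rN\cap P=rP$ for $P$ an $RD$-pure submodule of a summand $N$ (using $rM=rN\oplus rN'$), and $rM\cap K=(rM\cap P)\cap K=rP\cap K=rK$ for $K\leq P$ --- are both correct, and they are the only genuinely new content that the ``similar'' proof requires; the paper leaves them entirely implicit.

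The obstacle you flag in part (2) is genuine, but it is inherited from the paper's template rather than introduced by your $RD$-translation: in the proof of Proposition \ref{psm} the paper asserts $(L\oplus L')\cap P=(L\cap P)\oplus(L'\cap P)$, i.e.\ it lets intersection distribute over a direct sum, which fails for arbitrary submodules $P$. Of your two suggested repairs, the modular-law one does not work as stated, since the modular law requires $L\leq P$ or $L'\leq P$, neither of which is available; and $RD$-purity of $P$ in $M$ by itself does not force $L\cap P\leq^{\oplus}P$ (nor would an $RD$-analogue of the transitivity statement [Proposition 7.2, \cite{FH}], which concerns purity of $K$ in $M$, not splitting of $L\cap P$). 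The same caveat applies to the final step of part (1), where both you and Proposition \ref{DS1.1} pass from $P\leq^{e}L\leq^{\oplus}M$ with $P\cap N'=0$ to ``$P$ is essential in a direct summand of $N$'' without explaining why $L$, or $L\cap N$, is a summand of $N$. In short: your write-up reproduces the paper's intended argument faithfully, is correct in all the steps that are new to the $RD$-setting, and is more scrupulous than the source in locating where the template is thin; fully closing those splitting steps would require an additional lemma that neither you nor the paper provides.
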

\begin{proof}
The proof is similar to proposition \ref{psm} and \ref{DS1.1}. 
\end{proof}

\begin{proposition}
A flat $R$-module $M$ is pure extending iff $M$ is $RD$-pure extending.
\end{proposition}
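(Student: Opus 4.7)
The plan is to reduce the equivalence to the fact, stated in the introduction, that Cohn purity, Anderson--Fuller purity, and Ribenboim RD-purity of a submodule $N \leq M$ all coincide when the ambient module $M$ is flat. Once we have this coincidence of submodule classes, the equivalence of the two extending conditions is essentially a bookkeeping argument.

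For the reverse direction (RD-pure extending $\Rightarrow$ pure extending), no use of flatness is needed: it is exactly the content of Lemma \ref{RDS1.1}, since every (Cohn) pure submodule is automatically RD-pure. So the forward direction is the only substantive step.

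For the forward direction, assume $M$ is flat and pure extending. Let $N$ be an RD-pure submodule of $M$. The introduction records the chain of implications $(i) \Rightarrow (ii) \Rightarrow (iii)$ among the three purity notions, and states that when $M$ is flat these implications reverse, so that the three notions agree for submodules of $M$. Consequently $N$ is pure in $M$ in the sense of Cohn used throughout Section~2. Since $M$ is pure extending, there exists a direct summand $L \leq^{\oplus} M$ with $N \leq^{e} L$, and this is precisely what is required for $M$ to be RD-pure extending.

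The only place where anything delicate could go wrong is the appeal to the equivalence of purity notions for flat $M$; since this is asserted in the introductory discussion of the three definitions, we simply cite it. Thus the proof amounts to: apply Lemma \ref{RDS1.1} for one direction, and apply the purity-equivalence for flat modules plus the pure extending hypothesis for the other.
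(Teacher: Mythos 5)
Your proposal is correct and takes essentially the same route as the paper: the paper likewise gets the direction (RD-pure extending $\Rightarrow$ pure extending) from Lemma \ref{RDS1.1} and the converse from the coincidence of RD-purity with (Cohn) purity in flat modules, for which it cites [Corollary 11.21, \cite{CF}] --- the same fact you invoke via the introductory discussion of the three purity notions.
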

\begin{proof}
The proof follows from lemma \ref{RDS1.1} and [Corollary 11.21,\cite{CF}].
\end{proof}
\begin{corollary}
A free (projective) $R$-module $M$ is pure extending iff $M$ is $RD$-pure extending.
\end{corollary}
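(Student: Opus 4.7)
The plan is to deduce this corollary directly from the immediately preceding proposition, which says that a flat $R$-module $M$ is pure extending if and only if it is $RD$-pure extending. Since the corollary only specializes the hypothesis from ``flat'' to ``free (projective)'', the main task is to invoke the standard hierarchy of these module classes rather than to do any fresh work.

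First I would recall the well-known implications: every free $R$-module is projective (it has an obvious basis, hence every surjection onto it splits), and every projective $R$-module is flat (projectives are direct summands of free modules, free modules are flat, and flatness is preserved under direct summands). Therefore a free or projective $R$-module $M$ automatically satisfies the flatness hypothesis of the preceding proposition.

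Having established flatness, the rest is a one-line invocation: apply the preceding proposition to $M$ to conclude that $M$ is pure extending if and only if $M$ is $RD$-pure extending. The ``only if'' direction also matches Lemma \ref{RDS1.1} in the other direction of generality, so both implications are covered.

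There is no real obstacle here; the statement is a pure corollary, and the only thing to be careful about is citing the flat $\Rightarrow$ (pure extending iff $RD$-pure extending) equivalence correctly and noting that projectivity (and hence freeness) implies flatness, so no further hypotheses are needed.
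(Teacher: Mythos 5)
Your proposal is correct and matches the paper's intended argument: the paper states this corollary without proof, implicitly relying on exactly the reduction you spell out, namely that free $\Rightarrow$ projective $\Rightarrow$ flat, so the preceding proposition for flat modules applies verbatim. Your explicit justification of the projective $\Rightarrow$ flat step (direct summand of a free module, flatness passing to summands) is a reasonable elaboration of what the paper leaves tacit.
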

\begin{corollary}
A faithful multiplicative $R$-module $M$ is pure extending iff $M$ is $RD$-pure extending.
\end{corollary}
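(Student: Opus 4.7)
The plan is to reduce this corollary to the immediately preceding proposition (``A flat $R$-module $M$ is pure extending iff $M$ is $RD$-pure extending'') by showing that the hypothesis of being a faithful multiplication module already forces flatness. Once flatness is verified, the two notions coincide, and the corollary follows with no additional work.

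First I would recall the relevant definitions: a module $M$ is a \emph{multiplication} module if every submodule $N$ of $M$ has the form $N = IM$ for some ideal $I$ of $R$, and it is \emph{faithful} if $\mathrm{ann}_R(M)=0$. Then I would invoke the classical result of El-Bast and Smith (also proved, in different language, by D.D.\ Anderson) that every faithful multiplication module over a commutative ring is flat; equivalently, for a faithful multiplication module one has the distributivity $(I\cap J)M = IM\cap JM$ for all ideals $I,J$, which is precisely the condition characterising flatness for multiplication modules. Having placed $M$ in the class of flat modules, I would simply cite the preceding proposition to conclude that $M$ is pure extending iff $M$ is $RD$-pure extending.

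The only real obstacle is citational rather than mathematical: one must make sure the version of the flatness theorem being quoted is stated in enough generality (arbitrary, not necessarily finitely generated, faithful multiplication modules over a commutative ring) so that no extra finiteness hypothesis is smuggled in. Once the correct reference is in place, the proof of the corollary collapses to a single line appealing to the previous proposition.
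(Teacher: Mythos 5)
Your proposal takes essentially the same route as the paper: the paper's proof is the one-line remark that every faithful multiplication module is flat, which reduces the corollary to the preceding proposition on flat modules, exactly as you do. The only difference is that you supply an explicit reference for the flatness fact, which the paper leaves uncited.
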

\begin{proof}
The proof follows by the fact that every faithful multiplicative module is flat.
\end{proof}

\section{Characterization of rings using Pure extending modules}

In the next proposition we characterize the regular ring.
\begin{proposition}\label{fe1} For a ring $R$, the following conditions are equivalent:
\begin{enumerate}
        \item $R$ is a regular ring.
        \item Every pure extending $R$-module is flat.
    \end{enumerate}
    \end{proposition}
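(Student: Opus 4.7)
The plan is to handle the two implications separately. For $(1)\Rightarrow(2)$, no particular property of pure extending modules is really needed: by a classical characterization, $R$ is von Neumann regular if and only if every $R$-module is flat, so under hypothesis (1) every $R$-module, and in particular every pure extending one, is automatically flat.

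For the converse $(2)\Rightarrow(1)$, the strategy is to invoke the same characterization in reverse by showing that every $R$-module $M$ is flat. Given an arbitrary $M$, the plan is to pass to its pure injective hull $PE(M)$. The canonical embedding $M \hookrightarrow PE(M)$ is pure, since pure injective hulls are pure essential extensions, and $PE(M)$ is pure injective, hence pure quasi-injective, hence pure extending by Proposition \ref{prop1}. Hypothesis (2) then forces $PE(M)$ to be flat.

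The last step is to descend flatness from $PE(M)$ to the pure submodule $M$, using the standard fact that a pure submodule of a flat module is flat. For any injection $N' \hookrightarrow N$ of left $R$-modules, the composition $M \otimes N' \to PE(M) \otimes N' \to PE(M) \otimes N$ is injective (the first map by purity of $M$ in $PE(M)$, the second by flatness of $PE(M)$). Since this composition factors through $M \otimes N \to PE(M) \otimes N$, which is itself injective by purity, the map $M \otimes N' \to M \otimes N$ must be injective as well. Hence every $R$-module is flat, and $R$ is regular.

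The main obstacle is not really a technical one; the proof amounts to combining three standard ingredients, namely the regular-iff-every-module-flat characterization, the existence of a pure embedding $M \hookrightarrow PE(M)$ into a pure injective module, and the chain ``pure injective $\Rightarrow$ pure quasi-injective $\Rightarrow$ pure extending'' established earlier. The only point that requires genuine care is confirming that the embedding into $PE(M)$ is genuinely pure, since the entire descent argument rests on flatness passing through pure inclusions.
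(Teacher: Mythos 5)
Your proof is correct and takes essentially the same route as the paper: embed an arbitrary module $M$ purely into its pure injective hull $PE(M)$, observe that $PE(M)$ is pure injective, hence pure extending, so that hypothesis (2) forces $PE(M)$ to be flat, and then descend flatness to $M$ through the pure embedding. The only cosmetic difference is the final descent step, where the paper cites Lam's Corollary 4.86 to conclude $PE(M)/M$ is flat and then deduces flatness of $M$ from the pure exact sequence $0\rightarrow M\rightarrow PE(M)\rightarrow PE(M)/M\rightarrow 0$, whereas you verify directly with a tensor diagram that a pure submodule of a flat module is flat; both arguments are standard and interchangeable.
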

    \begin{proof}
    $(1)\Rightarrow (2)$ It is clear from [Theorem 4.21,\cite{TY}]\\
    $(2)\Rightarrow (1)$ Let $M$ be an right $R$-module and $PE(M)$ be a  pure injective hull of $M$. Then $0 \rightarrow M \rightarrow PE(M) \rightarrow PE(M)/M \rightarrow 0 $ is a pure exact sequence. From hypothesis, $PE(M)$ is a flat module so by [Corollary 4.86,\cite{TY}], $PE(M)/M$ is flat.  Therefore $M$ is a flat which implies $R$ is regular ring [Corollary 4.21,\cite{TY}].
    \end{proof}
    In the next proposition we characterize the semisimple ring.
     \begin{proposition}\label{fe2}
     For a ring $R$, the following conditions are equivalent:
     \begin{enumerate}
          \item $R$ is a right  semisimple ring.
          \item Every pure $C_3$ $R$-module is projective.
          \item Every pure $C_2$ $R$-module is projective
          \item Every quasi pure injective $R$-module is projective.
          \item Every pure injective $R$-module is projective.
          \item Every pure extending $R$-module is projective.
          \end{enumerate}
    \end{proposition}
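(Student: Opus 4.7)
The plan is to show (1) implies each of (2)--(6) trivially, then close the loop by showing each of (2)--(6) forces $R$ to be semisimple via a common template. Over a right semisimple ring every $R$-module is projective, so any module in the class appearing in (2)--(6) is projective; this gives the forward implications.

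For the reverse direction I take $(6) \Rightarrow (1)$ as representative, with the other four reverse implications following the same pattern. First, every injective right $R$-module lies in each of the five classes: injective implies pure injective, and in a pure injective module every pure submodule splits off (the pure embedding splits because the ambient module is pure injective), which immediately gives pure $C_1$ (pure extending, cf.\ Proposition \ref{prop1}), pure $C_2$, pure $C_3$, and pure quasi injectivity. Hence the hypothesis forces every injective right $R$-module to be projective, and by the classical characterisation this makes $R$ right quasi-Frobenius, in particular right artinian. Second, every simple right $R$-module trivially lies in each of the five classes, since its only submodules are $0$ and itself, so every simple right $R$-module is projective.

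To conclude, right artinianness gives $R/J(R)$ as a finite direct sum of simple right modules, each projective by the previous step, hence $R/J(R)$ is projective. The canonical surjection $R \to R/J(R)$ therefore splits, writing $R = J(R) \oplus eR$ for some idempotent $e \in R$. Then $J(R) = (1-e)R$ contains $1-e$, so $e = 1-(1-e)$ is a unit of $R$; combined with $e^{2}=e$ this forces $e = 1$, whence $J(R) = 0$. A right artinian ring with zero Jacobson radical is right semisimple, giving (1).

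The main obstacle lies in the first step: one must verify carefully that pure injectivity implies each of pure $C_{2}$ and pure $C_{3}$ (via the splitting of pure embeddings into a pure injective) and invoke the quasi-Frobenius theorem to extract right artinianness from the bare statement ``every injective right $R$-module is projective''. Once these are in place, Steps 2 and 3 apply uniformly to all five reverse implications, so the cycle $(1)\Rightarrow(i)\Rightarrow(1)$ closes for each $i \in \{2,3,4,5,6\}$.
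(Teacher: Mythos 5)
Your first step rests on a false lemma: a pure submodule of a pure injective module need \emph{not} split off. A pure exact sequence $0\rightarrow P\rightarrow M\rightarrow M/P\rightarrow 0$ splits when the \emph{submodule} $P$ is pure injective (extend $1_P$ along the pure embedding), not when the ambient module $M$ is; pure injectivity of $M$ provides no retraction $M\rightarrow P$. Concretely, over $\mathbb{Z}$ the module $M=\prod_{p}\mathbb{Z}/p\mathbb{Z}$ is pure injective and $P=\bigoplus_{p}\mathbb{Z}/p\mathbb{Z}$ is a pure submodule that is not a direct summand: direct summands of pure injective modules are pure injective, while $P$ is not algebraically compact. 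So your justification that every injective module lies in all five classes does not stand as written. The conclusion itself is true and easily repaired without any such splitting: injective $\Rightarrow$ extending $\Rightarrow$ pure extending; injective $\Rightarrow$ pure injective $\Rightarrow$ pure quasi injective $\Rightarrow$ pure $C_2$ $\Rightarrow$ pure $C_3$ (these are exactly the class inclusions the paper itself uses to chain $(2)\Rightarrow(3)\Rightarrow(4)\Rightarrow(5)$); or, directly, in an injective $M$ a pure submodule isomorphic to a direct summand is itself injective, hence a summand. A second, smaller gap: for $(5)\Rightarrow(1)$ your template needs simple modules to be \emph{pure injective}, and that is not ``trivial from having only the submodules $0$ and $S$'' --- pure injectivity is an extension property against pure embeddings into arbitrary modules, not an internal condition on $S$. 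It is true (a simple module is linearly compact in the discrete topology, hence algebraically compact, hence pure injective), but it needs that argument; triviality is fine only for the four internal conditions in $(2)$, $(3)$, $(4)$, $(6)$.

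Once repaired, your route is genuinely different from the paper's, and at the closing step it is the more careful one. The paper chains $(1)\Rightarrow(2)\Rightarrow(3)\Rightarrow(4)\Rightarrow(5)\Rightarrow(1)$ via the inclusions above, treats $(6)$ separately, and closes each loop in one line by ``every injective is projective, hence $R$ is semisimple by [Proposition 20.7, \cite{RW}]''. As your appeal to the quasi-Frobenius theorem correctly signals, ``every injective right module is projective'' by itself characterizes QF rings, not semisimple ones ($\mathbb{Z}/4\mathbb{Z}$ satisfies it, yet its pure injective module $\mathbb{Z}/2\mathbb{Z}$ is not projective), so your supplementary step --- every simple module in the class is projective, hence $R/J(R)$ is projective, the idempotent argument gives $J(R)=0$, and artinianness finishes --- supplies exactly the content the one-line closure omits. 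In fact you can drop the QF detour entirely: if every simple right module is projective, then every maximal right ideal is a direct summand with nonzero complement, so no proper right ideal is essential (any proper essential right ideal lies in a maximal one, which would then be essential), whence $\mathrm{Soc}(R_R)=R$, the intersection of the essential right ideals, equals $R$ and $R$ is semisimple. That closes each implication $(i)\Rightarrow(1)$ using only the simple-module half of your argument and no artinian hypothesis.
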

    
     \begin{proof}
     $(1)\Rightarrow (2)$ Let $R$ be a right semisimple ring then every $R$-module $M$ is projective [Proposition 20.7,\cite{RW}]. So $(2)$ holds.\\
     $(2)\Rightarrow (3)$ Every pure $C_2$ module is pure $C_3$, so (3) holds.\\
     $(3)\Rightarrow (4)$ Every quasi pure Injective right $R$- module is pure $C_2$, so (4) holds.\\
    $(4)\Rightarrow (5)$ Every pure injective right $R$-module  is quasi pure injective. so (5) holds.\\
    $(5)\Rightarrow (1)$ Every pure injective right $R$-module is projective it implies every injective is projective. Therefore $(1)$ holds by [Proposition 20.7,\cite{RW}] \\
     $(1) \Leftrightarrow (4)$ It follows from [Proposition 6,\cite{SK}].\\
     $(1)\Rightarrow (6)$ Let $R$ be a semisimple ring it implies every $R$-module is injective (in particular pure extending) and projective [Proposition 20.7,\cite{RW}], so (6) holds.\\
     $(6)\Rightarrow (1)$ By hypothesis, every pure extending  module is projective it implies injective is projective. Hence, $R$ is a right semisimple ring \cite[Proposition 20.7]{RW}.
     \end{proof}
     \begin{proposition}
Let $R$ be a local ring then the module $R_R$ is pure extending.
\end{proposition}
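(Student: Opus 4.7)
The plan is to prove the stronger assertion that every pure submodule of $R_R$ is in fact a direct summand of $R_R$, from which the pure extending property is immediate. I would organise the argument in three steps.

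First, I would pin down the direct summands of $R_R$. Since $R$ is local, its only idempotents are $0$ and $1$, so the only direct summands of $R_R$ are $\{0\}$ and $R$ itself; in particular $R_R$ is indecomposable.

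Second, I would translate purity into a flatness condition on the quotient. Let $P$ be a pure submodule of $R_R$. Because $R_R$ is free, hence flat, purity of the inclusion $P\hookrightarrow R$ (in Cohn's sense, definition $(i)$ of the introduction, which in the presence of flatness agrees with the other two notions) is equivalent to flatness of $R/P$. Now $R/P$ is cyclic, hence finitely generated, and this is where the locality of $R$ enters: by the classical theorem that every finitely generated flat module over a local ring is free, $R/P$ is free.

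Third, I would extract the splitting and conclude. Since $R/P$ is free, hence projective, the short exact sequence $0\to P\to R\to R/P\to 0$ splits, exhibiting $P$ as a direct summand of $R_R$. Combined with the first step this forces $P\in\{0,R\}$, and in either case $P$ is (trivially) essential in itself, which is a direct summand. Hence $R_R$ is pure extending.

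The main point requiring care is the invocation of ``finitely generated flat over a local ring is free'' at the generality allowed by the paper, which does not restrict to commutative rings; the noncommutative version of this fact is still valid, so the argument goes through as stated. One could try to avoid it by a direct Nakayama argument (using $\mathfrak{m}P=P\cap\mathfrak{m}=P$ once one reduces to $P\subseteq\mathfrak{m}$) to force $P=0$, but this only handles finitely generated $P$ cleanly, which is why the flat-quotient route is preferable.
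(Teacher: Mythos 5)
Your proof is correct, but it takes a genuinely different route from the paper's. The paper settles the statement in one line by citing [Theorem 3,\cite{FDJ}]: every local ring is pure simple, so the only pure submodules of $R_R$ are $0$ and $R$, which are trivially essential in direct summands. You instead reprove that input from scratch, and in a sharper form: since $R_R$ is flat, purity of $P\leq R_R$ forces $R/P$ to be flat (the same [Corollary 4.86,\cite{TY}] the paper itself invokes in its regular-ring characterization), the cyclic flat module $R/P$ is then free by the theorem that finitely generated flat modules over a local ring are free --- which, as you rightly flag, is valid without commutativity (it follows, for instance, from local rings being semiperfect, finitely generated flat modules over semiperfect rings being projective, and Kaplansky's theorem that projectives over local rings are free; see also \cite{TY}) --- so the pure exact sequence $0\to P\to R\to R/P\to 0$ splits, and triviality of idempotents in a local ring forces $P\in\{0,R\}$. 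In effect you establish the stronger facts that $R_R$ is pure split and that local rings are pure simple, rather than quoting Fieldhouse. The paper's proof buys brevity via an external citation; yours is self-contained modulo standard flatness theory and yields the sharper conclusion that every pure right ideal is a direct summand. Your closing caution is also well placed: the Nakayama shortcut (which for right modules should read $PJ(R)=P\cap RJ(R)=P$ for proper $P$, the paper's displayed purity condition $(ii)$ being written in the left-module convention) only disposes of finitely generated $P$, since $PJ(R)=P$ does not force $P=0$ for arbitrary $P$ over a general local ring; the flat-quotient detour is exactly what makes the argument work for all pure right ideals.
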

\begin{proof}
By [Theorem 3,\cite{FDJ}], every local ring is pure simple it implies  $R_R$ and 0  are its only pure submodules. Hence, $R_R$ is pure extending module.
\end{proof}
      A ring $R$ is called left PDS if pure submodule of left $R$-module are direct summand and ring $R$ is said to be a PDS if it is both left and right PDS \cite{FH}.
\begin{proposition}
    For a PDS ring $R$, every $R$-module is pure extending.
\end{proposition}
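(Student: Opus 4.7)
The plan is to unpack the definitions and observe that the conclusion is essentially immediate. A ring $R$ is PDS precisely when every pure submodule of every (left and right) $R$-module is already a direct summand, while $M$ being pure extending requires only that every pure submodule be essential in some direct summand. Since any submodule is essential in itself, the PDS hypothesis forces the stronger of the two conditions, hence the weaker.

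Concretely, I would let $M$ be an arbitrary $R$-module and let $N$ be a pure submodule of $M$. By the PDS hypothesis, $N \leq^{\oplus} M$, so we may write $M = N \oplus N'$ for some submodule $N'$. Because $N \leq^{e} N$ trivially, we have exhibited $N$ as essential in a direct summand of $M$. Since $N$ was an arbitrary pure submodule, $M$ satisfies the pure $C_1$ condition, and thus $M$ is pure extending.

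There is no real obstacle here; the statement is a direct corollary of the two definitions, and the argument is a one-line unwinding. The only thing worth noting for clarity is that the PDS definition in the paper covers both left and right modules, so the conclusion applies uniformly to every $R$-module regardless of the side.
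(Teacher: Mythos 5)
Your proof is correct and follows essentially the same route as the paper's: invoke the PDS hypothesis to conclude that every pure submodule $N$ of $M$ is a direct summand, then observe that $N \leq^{e} N$ makes $N$ essential in a direct summand. Your version is in fact slightly more careful than the paper's, which compresses this unwinding into a single sentence.
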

\begin{proof}
     Let $M$ be an $R$-module. By hypothesis, $R$ is PDS ring  which implies every pure submodule is essential in a direct summand. Hence $M$ be pure extending module.
\end{proof}
\begin{proposition}\label{cm}
  Every flat cotorsion module is pure extending.
\end{proposition}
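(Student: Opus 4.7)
The plan is to strengthen ``flat cotorsion'' to ``pure-injective'' and then appeal to Example 2(ii), by which every pure-injective module is pure extending. Let $M$ be flat and cotorsion, and embed $M$ in its pure-injective hull $PE(M)$, producing the pure exact sequence
$$0 \longrightarrow M \longrightarrow PE(M) \longrightarrow PE(M)/M \longrightarrow 0.$$

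I would then invoke the fact (due to Xu; see also Enochs--Jenda, \emph{Relative Homological Algebra}) that the pure-injective hull of a flat module is itself flat. Once $PE(M)$ is known to be flat, the purity of $M$ inside the flat module $PE(M)$ forces $PE(M)/M$ to be flat as well (pure quotients of flat modules are flat). The cotorsion hypothesis on $M$ now gives $\mathrm{Ext}^1_R(PE(M)/M,\, M) = 0$, so the displayed sequence splits. Hence $M$ is a direct summand of the pure-injective module $PE(M)$ and is therefore itself pure-injective; Example 2(ii) then delivers that $M$ is pure extending.

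The main obstacle is the flatness of $PE(M)$: the rest of the argument is essentially formal once this is in hand, but this ingredient is a nontrivial result of relative homological algebra that does not follow from the machinery developed earlier in the paper and must be imported from the literature. If one wanted to avoid citing it, an alternative route would be to replace $PE(M)$ by the cotorsion envelope from the complete cotorsion pair $(\mathrm{Flat},\mathrm{Cotorsion})$, but this shifts rather than removes the external input, so I would prefer the pure-injective hull formulation above.
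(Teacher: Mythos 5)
Your proof is correct, but it takes a genuinely different route from the paper. The paper's proof is a two-line citation: by [Proposition 3.2, Mao--Ding] a flat cotorsion module is quasi pure-injective, and then Proposition \ref{prop1} (pure quasi-injective $\Rightarrow$ pure extending) finishes it. You instead re-prove the underlying literature fact from scratch: you show that a flat cotorsion module $M$ is actually \emph{pure-injective}, by splitting the pure exact sequence $0 \to M \to PE(M) \to PE(M)/M \to 0$ using flatness of $PE(M)$ (Xu's lemma, or Enochs--Jenda), flatness of the pure quotient $PE(M)/M$ (this step is even available inside the paper, which cites [Corollary 4.86, Lam] for exactly this purpose in Proposition \ref{fe1}), and $\mathrm{Ext}^1_R(PE(M)/M, M)=0$ from the cotorsion hypothesis; then Example 2(ii) — which itself funnels through Proposition \ref{prop1} — gives pure extending. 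Both arguments therefore converge at the same final mechanism, but yours establishes the strictly stronger intermediate conclusion (pure-injectivity rather than quasi-pure-injectivity) at the cost of importing Xu's nontrivial result that the pure-injective hull of a flat module is flat, which you correctly flag as the one external ingredient. A small remark: you could make that import cheaper by replacing $PE(M)$ with the double character module $M^{++}=\mathrm{Hom}_{\mathbb{Z}}(\mathrm{Hom}_{\mathbb{Z}}(M,\mathbb{Q}/\mathbb{Z}),\mathbb{Q}/\mathbb{Z})$, into which $M$ embeds purely and which is pure-injective and flat whenever $M$ is flat by the standard character-module duality; the rest of your splitting argument goes through verbatim and avoids pure-injective hull machinery altogether. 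The paper's route buys brevity by outsourcing everything to Mao--Ding; yours buys transparency and a stronger statement.
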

    \begin{proof}
    Let $M$ be a flat and cotorsion module then by [Proposition 3.2,\cite{LM}], $M$ is quasi pure injective. Therefore $M$ is a pure extending module.
\end{proof}
\begin{corollary}
     If $R$ is a right cotorsion ring then $R_R$ is pure extending $R$-module.
\end{corollary}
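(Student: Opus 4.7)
The plan is to deduce this immediately from Proposition \ref{cm}, which already establishes that every flat cotorsion module is pure extending. So the only thing I need to verify is that $R_R$ satisfies both hypotheses under the assumption that $R$ is a right cotorsion ring.

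First I would observe that $R_R$ is always a flat right $R$-module, since $R$ is free (hence projective, hence flat) as a module over itself. This is the easy half and requires no argument beyond stating it. Next, by the very definition of a right cotorsion ring, $R_R$ is a cotorsion module, i.e.\ $\mathrm{Ext}^1_R(F,R_R)=0$ for every flat right $R$-module $F$. Combining these two facts, $R_R$ is a flat cotorsion module.

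Now I would invoke Proposition \ref{cm} directly to conclude that $R_R$ is pure extending as an $R$-module. There is no real obstacle here; the corollary is essentially a restatement of Proposition \ref{cm} for the specific module $R_R$, and the only content is the (trivial) observation that $R_R$ is flat together with the definition of a right cotorsion ring. Hence the proof is a single line: \emph{Since $R_R$ is flat and, by hypothesis, cotorsion, Proposition \ref{cm} yields that $R_R$ is pure extending.}
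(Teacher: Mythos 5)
Your proof is correct and matches the paper's intended argument exactly: the paper states this corollary without a separate proof precisely because, as you observe, $R_R$ is flat (being free over itself) and cotorsion by the definition of a right cotorsion ring, so Proposition \ref{cm} applies immediately.
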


\end{document}